\numberwithin{equation}{section}
\newtheorem{theorem}{Theorem}[section]
\newtheorem{lemma}[theorem]{Lemma}
\newtheorem{remark}[theorem]{Remark}
\definecolor{vine}{rgb}{0.7,0.1,0.1}
\definecolor{vine}{rgb}{0.7,0.1,0.1}
\definecolor{darkgreen}{rgb}{0,0.5,0}
\newcommand{\R}{{\mathbb R}}
\renewcommand{\div}{\operatorname{div}\,}
\newcommand{\curl}{{{\operatorname{curl}}\,}}
\newcommand \nc{\newcommand}
\nc{\ba}{\begin{array}}\nc{\ea}{\end{array}}
\nc{\be}{\begin{eqnarray}}\nc{\ee}{\end{eqnarray}}
\nc{\beq}{\begin{equation}}\nc{\eeq}{\end{equation}}
\nc{\bex}{\begin{eqnarray*}}\nc{\eex}{\end{eqnarray*}}
\nc{\btm}{\begin{theorem}} \nc{\etm}{\end{theorem}}
\nc{\blm}{\begin{lemma}} \nc{\elm}{\end{lemma}}
\nc{\ld}{\lambda}
\nc{\va}{\varphi}
\nc{\ve}{\varepsilon}
\def\curl{\mbox{curl\,}}
\begin{document}
\author{Jeaheang Bang}
\address{Institute of Theoretical Sciences, Westlake University, Hangzhou, Zhejiang, 310030, P. R. China}
\email{jhbang@westlake.edu.cn}

\author{Changyou Wang}
\address{Department of Mathematics, Purdue University,
 150 N. University Street,
 West Lafayette, IN 47907-2067, USA}
\email{wang2482@purdue.edu}
\title{On Rigidity of the Steady Ericksen-Leslie System}

\begin{abstract}
    We study solutions, with scaling-invariant bounds, to the steady simplified Ericksen-Leslie system in $\mathbb{R}^n\setminus \{0\}$.
    When $n=2$, we construct and classify a class of self-similar solutions.  When $n\ge 3$, 
    we establish the rigidity asserting that if $(u,d)$ satisfies a scaling-invariant bound with a small constant, then 
    $u\equiv 0$ and $d=$ constant for $n\geq 4$ or  $u$ is a Landau solution and 
    $d=$ constant for $n=3$. Such a smallness condition can be weaken when $n=4$ or the solutions are self-similar.
\end{abstract}
\maketitle
\section{Introduction}

The Ericksen-Leslie system, proposed by Ericksen \cite{Ericksen1962} and Leslie \cite{Leslie1968}, models the hydrodynamic motion of nematic liquid crystal materials under the influence of both the underlying fluid velocity field and the macroscopic average orientation field of the liquid crystal molecules.  A simplified Ericksen-Leslie system proposed by Lin \cite{Lin1989} reads as follows. For $n=2,3$ and $\Omega\subset\mathbb{R}^n$,
let $(u, d):\Omega\times (0,\infty)\to \mathbb R^n\times \mathbb{S}^2$ solve
 \begin{align}
    \label{EL}
    \left\{
    \begin{aligned}
        \partial_t u-\Delta u + u\cdot \nabla u + \nabla p &=
        -\div{(\nabla d \odot \nabla d)},
        \
        \div u=0,
        \\
        \partial_td+ u\cdot \nabla d
        &=\Delta d+ |\nabla d|^2 d.
    \end{aligned}
    \right.
    \end{align}
Here $u$ and $d$ represent the underlying fluid velocity field and the averaged 
orientation field of the nematic liquid crystal molecules respectively, and $p$ stands for the pressure function. Here $\nabla d\odot\nabla d$ represents the Ericksen stress tensor defined by
$
(\nabla d \odot \nabla d)_{ij}=\partial _{x_i} d \cdot \partial_{x_j} d.
$
Mathematically, the Ericksen-Leslie system \eqref{EL} is a system strongly coupling the forced Navier-Stokes system with the transported heat flow of harmonic maps into the unit sphere $\mathbb{S}^2\subset \mathbb{R}^3$.

Due to both the supercritical nonlinearities and their strong couplings, it has been a challenging problem to analyze \eqref{EL}, such as the existence, uniqueness, and regularity of solutions. In dimensions $n=2$, Lin-Lin-Wang \cite{LLW2010} 
established the global existence of a weak solution 
of the initial-boundary value problem of \eqref{EL}, that is smooth away from at most finitely many singular times (see also \cite{Hong2011, HongXin2012} for related results for $\Omega=\mathbb{R}^2$), while the uniqueness was proven by Lin-Wang \cite{LW2011} (see also \cite{XuZhang2012} and \cite{LTX2016}). When the dimension $n=3$, there are fewer results available in the literature. Wang \cite{Wang2011} obtained the well-posedness for \eqref{EL} on $\mathbb{R}^3$, when an initial data $(u_0, d_0)$ has small ${\rm{BMO}}^{-1}\times{\rm{BMO}}$-norm, and Lin-Wang \cite{LinWang2016} obtained the global existence of Leray-Hopf solutions of \eqref{EL} when the initial data $d_0(\mathbb{R}^3)\subset\mathbb{S}^2_+$. The existence of finite time singularities of \eqref{EL} 
on $\mathbb{R}^n$ has been constructed by Huang-Lin-Liu-Wang \cite{HLLW2016} for $n=3$ and Lai-Lin-Wang-Wei-Zhou \cite{LLWWZ2022} for $n=2$. However, those singularities $(x_*, T_*)$ are of the Type II singularities, i.e.,
\begin{align}
\label{t2}
\limsup_{t\uparrow T_*}\sqrt{T_*-t}\big\|(u(t),\nabla d(t))\big\|_{C^0(\mathbb{R}^n)}=\infty.    
\end{align}
This naturally leads to the question that
if \eqref{EL} admits finite time Type I singularity, 
namely, there exists a constant $C_*>0$ such that
\begin{align}
\label{t1}
\big\|(u(t),\nabla d(t))\big\|_{C^0(\mathbb{R}^n)}
\le \frac{C_*}{\sqrt{T_*-t}}, \ 0<t<T_*.    
\end{align}
It is well-known that the existence of Type I singularity of \eqref{EL} is closely related to the existence of nontrivial self-similar solutions $(u_*,d_*)$ of
\eqref{EL} in $\mathbb{R}^n\times (-\infty, 0)$:
$$
(\lambda u_*, d_*)(\lambda x,\lambda^2 t)=
(u_*, d_*)(x,t), \ \forall\lambda>0,\ (x,t)\in 
\mathbb{R}^n\times (-\infty, 0).
$$
This is an open question that we plan to exploit in the near future. In this paper, we will focus on solutions of the steady form of \eqref{EL}, which can also be viewed as the limiting equation of \eqref{EL} at the time infinity. More specifically, we will study the Liouville property of certain solutions to the steady Ericksen-Leslie system in $\mathbb{R}^n$:
    \begin{align}
    \label{SEL}
    \left\{
    \begin{aligned}
        -\Delta u + u\cdot \nabla u + \nabla p &=
        -\div{(\nabla d \odot \nabla d)},
        \ \div u=0,
        \\
        \Delta d+|\nabla d|^2 d&= u\cdot \nabla d.
    \end{aligned}
    \right.
    \end{align}
The system \eqref{SEL} is strongly coupling between the steady Navier-Stokes equation and the harmonic map equation. 

Note that the system \eqref{SEL} enjoys the scaling property, i.e., if $(u,p,d)$ is a solution to \eqref{SEL}, then 
$\left( \lambda \, u(\lambda x), \lambda^2 \, p (\lambda x), d(\lambda x) \right)$ is also a solution of
\eqref{SEL}, for any $\lambda >0$. Motivated by this, we will also study\emph{self-similar solutions} $(u,p,d)$ of \eqref{SEL}, that is,
    \begin{align*}
        u(x)=\lambda  u(\lambda x),
        \quad p(x)= \lambda^2  p(\lambda x),
        \quad d(x)= d(\lambda x) 
        \quad \text{in }\ \mathbb{R}^n \setminus \{0\}, \ \forall \lambda>0.
    \end{align*}

We would also like to remark that in general, self-similar solutions of \eqref{SEL} are useful to study the asymptotic behavior of general solutions of \eqref{EL}  near $|x|=\infty$ or an isolated singularity of \eqref{SEL}.

When $d$ 
is a constant, the equation \eqref{SEL} reduces to the steady Navier-Stokes equation:
     \begin{align}
    \label{SNS}
    \begin{aligned}
        -\Delta u + u\cdot \nabla u + \nabla p =0,
        \ \div u=0.
    \end{aligned}
    \end{align}
There have been a number of studies on self-similar solutions to \eqref{SNS}. For instance, 
when $n=2$, all nontrivial self-similar solutions $u\in C^\infty (\mathbb{R}^2\setminus \{0\})$ to \eqref{SNS} have been found by \cite{Hamel17}, \cite{Sverak11}, \cite{GuillodWittwer15SIAM},
referred as Hamel solutions.

When $n=3$, all self-similar solutions to \eqref{SNS} have been found and characterized  in \cite{Landau44}, \cite{Sverak11}, \cite{TianXin98}. Such solutions are called Landau solutions, which are given by
    \begin{align*}
	u_a(x)
	=
	\curl (\psi_a e_\theta),
    \quad \psi_a=
    \frac{2\sin \phi}{a-\cos \phi},
	\end{align*}
where $\phi\in (0,\pi)$ is the polar angle and $\theta\in [0,2\pi)$ the azimuthal angle in spherical coordinates, and the parameter $a\in (1,\infty]$ (see also \cite{Tsai18}).
However, when $n\geq 4$, it was proved by\ \cite{BangGuiLiuWangXie23} that any solution $u\in C^\infty(\mathbb{R}^n\setminus \{0\})$ to \eqref{SNS} must vanish, if it satisfies
    \begin{align*}
        |u(x)|\leq \frac{C}{|x|} \quad \text{in }\mathbb{R}^n\setminus \{0\}.
    \end{align*}

When $u\equiv 0$ and $p$ is constant, the equation \eqref{SEL} reduces to the equation of harmonic maps:
    \begin{align}
    \label{HM}
        \Delta d +|\nabla d|^2 d=0.
    \end{align}
When $n=2$ and $d:\mathbb{R}^2 \to \mathbb{S}^1\subset\mathbb{S}^2$, 
all nontrivial self-similar solutions of \eqref{HM}, in polar coordinates, are given by
    \begin{align*}
        d(r,\theta)= \cos (m\theta+\theta_0) e_r + \sin (m\theta+\theta_0) e_\theta,
    \end{align*}
for an integer $m$ and a constant $\theta_0\in (0,2\pi)$, where $\{e_r,e_\theta\}$ denotes the basis vectors of the polar coordinate system.
When $n=3$, $d(x)=\phi(\frac{x}{|x|})$, where
$\phi:\mathbb{S}^2\cong\overline{\mathbb{C}^2}
\to\mathbb{S}^2\cong\overline{\mathbb{C}^2}$ is a rational function in $z$ or $\bar{z}$ (see \cite{BCL1986}).

Here, we prove the following result for self-similar solutions of \eqref{SEL} for $n=2$.
\begin{theorem}
\label{2Dthm} 

i) For any $c, \theta_0\in\mathbb{R}, m\in\mathbb{Z}$, 
the triple $(u_c, p_{c, m}, 
d_m)$ given by
$$
u_c=\frac{c}{r} e_r,\ \
p_{c,m}=\frac{(m+1)^2-c^2}{2r^2}, \  \ d_m=\cos(m\theta+\theta_0)e_r
+\sin(m\theta+\theta_0)e_\theta
$$
is a self-similar solution of
\eqref{SEL} on $\mathbb{R}^2\setminus\{0\}$.\\
ii) For any $\Phi\in \mathbb{R}, k\in \mathbb{N}$, with $
        4+\frac{\Phi}{\pi}\leq k^2,$
there exists a nontrivial periodic function $f_{\Phi,k}$, with the minimal period $\frac{2\pi}{k}$, satisfying 
$
\int_0^{2\pi} f_{\Phi,k} (\theta) \, d\theta = \Phi,
$
such that, for $\theta_1, \theta_2\in \mathbb{R}, m\in \mathbb{Z}$, the triple $(u_{\Phi,k}, p_{\Phi,k,m}, d_m)$ given by
    \begin{align*}
        u_{\Phi,k}
        &=\frac{f_{\Phi,k}(\theta+\theta_1) }{r}e_r,
        \\
        p_{\Phi,k,m}
        &=
        \frac{1}{r^2}
        \big( 
        2f_{\Phi,k} (\theta+\theta_1)
        -\frac{1}{4\pi} \int_0^{2\pi} |f_{\Phi,k}(\theta)|^2 \, d\theta
        -\frac{2\Phi}{\pi}
        +\frac{1}{2}(m+1)^2
        \big),
        \\
        d_m
        &= \cos (m\theta+\theta_2) e_r + \sin (m\theta+ \theta_2) e_\theta
    \end{align*}
    is a self-similar solution of
\eqref{SEL} on $\mathbb{R}^2\setminus\{0\}$.\\
iii) For any $ \mu\not=0, \Psi, \theta_3 \in \mathbb{R}$,
the triple $(u_{\Psi,\mu}, p_{\Psi,\mu}, d_{-1})$ given by
    \begin{align*}
        u_{\Psi,\mu}
        =
        \frac{\Psi}{2\pi r} e_r+ \frac{\mu}{r}e_\theta, 
        \quad 
        p_{\Psi,\mu}
        =-\frac{|u_{\Psi,\mu}|^2}{2},
        \quad 
        d_{-1}= \cos (-\theta+\theta_3) e_r+ \sin (-\theta+\theta_3) e_\theta
    \end{align*}
    is a self-similar solution of
\eqref{SEL} on $\mathbb{R}^2\setminus\{0\}$.\\
Furthermore, any self-similar solution  $(u, d, p): \mathbb{R}^2\setminus\{0\}
\to \mathbb{R}^2\times\mathbb{S}^1\times \mathbb{R}$ of\eqref{SEL} is one of the above forms.
\end{theorem}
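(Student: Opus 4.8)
\emph{Reduction to ODEs in the angular variable.} The plan is to pass to polar coordinates $(r,\theta)$ and use the scaling to collapse \eqref{SEL} to ordinary differential equations. Writing $u=u_r e_r+u_\theta e_\theta$, the homogeneities $u(x)=\lambda u(\lambda x)$, $p(x)=\lambda^2 p(\lambda x)$, $d(x)=d(\lambda x)$ force the profiles $u_r=a(\theta)/r$, $u_\theta=b(\theta)/r$, $p=P(\theta)/r^2$ and $d=d(\theta)$. Since $\div u=r^{-2}b'(\theta)$, incompressibility gives $b\equiv\mu$ constant, so the azimuthal part is a pure swirl $u_\theta=\mu/r$. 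Because $d$ is valued in a fixed great circle of $\mathbb{S}^2$, after an ambient rotation I may take it to be the equator and write $d=\cos\beta\,e_r+\sin\beta\,e_\theta=(\cos\gamma,\sin\gamma,0)$ with $\gamma=\theta+\beta$; single-valuedness requires $\gamma(\theta+2\pi)-\gamma(\theta)\in2\pi\mathbb{Z}$, and the stated form $d_m$ corresponds to $\gamma=(m+1)\theta+\theta_0$.

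\emph{The director equation and the dichotomy in $\mu$.} A direct computation gives $|\nabla d|^2=|\nabla\gamma|^2$, $\Delta d+|\nabla d|^2 d=(\Delta\gamma)\,d^\perp$ with $d^\perp=(-\sin\gamma,\cos\gamma,0)$, and $u\cdot\nabla d=(u\cdot\nabla\gamma)\,d^\perp$, so the second equation of \eqref{SEL} collapses to the scalar identity $\Delta\gamma=u\cdot\nabla\gamma$, which in the ansatz reads $\gamma''(\theta)=\mu\,\gamma'(\theta)$. Integrating, $\gamma'=A e^{\mu\theta}$; imposing $2\pi$-periodicity of $\gamma'$ (forced by single-valuedness of $\nabla d$) splits the analysis: if $\mu\neq0$ then $A=0$, $\gamma$ is constant and $d\equiv d_{-1}$ is a constant map; if $\mu=0$ then $\gamma$ is affine with integer slope $\gamma'=m+1$.

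\emph{The momentum equations and the swirling case.} Using $\div(\nabla d\odot\nabla d)=(\Delta\gamma)\nabla\gamma+\tfrac12\nabla|\nabla\gamma|^2$ and $\gamma''=\mu\gamma'$, the radial and azimuthal components of the first equation of \eqref{SEL}, multiplied by $r^3$, reduce (schematically) to
\begin{align*}
-a''-a^2+\mu a'-\mu^2-2P=(\gamma')^2,\qquad -2a'+P'=-2\gamma'\gamma'',
\end{align*}
the azimuthal convective terms cancelling identically. The second relation integrates to $P=2a+\mathrm{const}$, and eliminating $P$ produces a single second-order ODE for $a$. When $\mu\neq0$ the forcing vanishes and this ODE is $a''-\mu a'+a^2+4a+\mathrm{const}=0$; multiplying by $a'$ and integrating over $[0,2\pi]$ annihilates every term except $-\mu\int_0^{2\pi}(a')^2$, so $\mu\neq0$ forces $a'\equiv0$. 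The profile is therefore constant, and matching the pressure via $P=2a+\mathrm{const}$ reproduces exactly the swirling family of part iii).

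\emph{The irrotational case and the main obstacle.} When $\mu=0$ the profile obeys the conservative equation $a''+a^2+4a+E=0$, whose only $2\pi$-periodic solutions are equilibria, giving the constant radial flows of part i), or genuine periodic orbits of minimal period $2\pi/k$, giving the profiles $f_{\Phi,k}$ of part ii). I expect this final step to be the main obstacle: after the shift $w=a+2$ the equation becomes $w''+w^2+\widetilde E=0$, a cubic oscillator, and one must analyze its phase portrait to determine precisely which energy levels yield orbits whose minimal period divides $2\pi$, then convert the period quantization $T=2\pi/k$ together with the flux normalization $\int_0^{2\pi}a\,d\theta=\Phi$ into the sharp admissibility inequality $4+\Phi/\pi\le k^2$. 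This is a period–amplitude (elliptic-integral) computation, with the inequality saturated at the bifurcation from the constant solution; combining the four regimes then shows that every self-similar solution is one of the listed forms.
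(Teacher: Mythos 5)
Your reduction coincides with the paper's own proof: polar coordinates and homogeneity collapse \eqref{SEL} to angular ODEs; $\div u=b'(\theta)/r^2$ forces the swirl $b\equiv\mu$ to be constant; the director equation becomes $\gamma''=\mu\gamma'$ (the paper's $\xi''=v(\xi'+1)$ with $\gamma=\theta+\xi$), giving the same dichotomy; and in the swirl case your argument --- multiply the profile equation by $a'$, integrate over a period, and conclude $\mu\neq0$ forces $a'\equiv0$ --- is exactly the paper's Case 2. The genuine gap is the step you explicitly defer: showing that the no-swirl equation $a''+a^2+4a+E=0$ (with $E$ an unknown constant) admits non-constant solutions of minimal period $2\pi/k$ and flux $\int_0^{2\pi}a\,d\theta=\Phi$ precisely when $4+\Phi/\pi\le k^2$. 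Part ii) and the closing classification assertion both rest on this, so as written your proposal establishes parts i) and iii) (modulo the sign issue below) but neither part ii) nor the ``furthermore'' statement. The paper does not carry out the phase-plane/elliptic-integral analysis you sketch either: it observes that this ODE is, via $\varphi=-f$, exactly the profile equation for two-dimensional self-similar Navier--Stokes flows (the Ericksen stress only shifts the unknown constant by $(m+1)^2$), and quotes the period/flux classification from Guillod--Wittwer \cite{GuillodWittwer15SIAM} (their equation (2.10)). So the missing step can be closed by citation rather than by the bifurcation/period-monotonicity computation you propose (your remark that the inequality is saturated at the bifurcation from the constant solution is correct, but it is a heuristic, not a proof); one of the two must actually be done, and your proposal does neither.

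A second point that cannot stay ``schematic'': the signs of the stress terms. Your reduced momentum equations carry $+(\gamma')^2$ (radial) and $-2\gamma'\gamma''$ (azimuthal) on the right-hand side, whereas the paper's system \eqref{SSeq} has $-(\xi'+1)^2$ and $+\big((\xi'+1)^2\big)'$; these are inequivalent and lead to different pressures in parts i)--ii) (part iii) is unaffected, since there $\gamma'\equiv0$). The elementary test $u\equiv0$, $d=x/|x|$ (so $\gamma'=1$), for which
\begin{equation*}
\div(\nabla d\odot\nabla d)=\Delta d\cdot\nabla d+\tfrac12\nabla|\nabla d|^2=\tfrac12\nabla\big(|x|^{-2}\big),
\end{equation*}
forces $p=-\frac{1}{2|x|^2}$ and thus supports your signs; carried through Case 1 they give $p_{c,m}=-\frac{c^2+(m+1)^2}{2r^2}$ rather than the stated $\frac{(m+1)^2-c^2}{2r^2}$. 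The existence of $f_{\Phi,k}$ and the inequality $4+\Phi/\pi\le k^2$ are insensitive to this (the profile equation is the same up to renaming the unknown constant; only the reconstruction of the pressure changes), but the theorem asserts specific pressure formulas, so verifying --- or reconciling --- them is part of the proof, and your write-up leaves the sign undecided.
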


\bigskip

For $n\geq3$, we will allow {\it the director field $d$ to take value in
$\mathbb{S}^{n-1}$} in the following discussions.

First, we obtain a rigidity result of  solutions, with scaling-invariant bounds.

\begin{theorem} \label{main_small}
For $n\ge 3$, there exists an $\varepsilon_0>0$ such that if $(u, d)\in C^\infty(\R^n\setminus\{0\}, \R^n\times\mathbb S^{n-1})$ 
is a solution of the system \eqref{SEL}, satisfying
\begin{equation}\label{small1}
\max\big\{|u(x)|, |\nabla d(x)|\big\}\le \frac{\varepsilon_0}{|x|}, \ \forall x\in \R^n\setminus \{0\},
\end{equation}
then i) if $n\ge 4$, then $u\equiv 0$ and $d$ is a constant map; and\\
ii) if $n=3$, then $d$ is a constant map, and $u$ is either $0$ or
a Laudau solution of \eqref{SNS}.
\end{theorem}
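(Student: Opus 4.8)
The plan is to decouple the system by first proving that the director field $d$ is necessarily constant; once $\nabla d\equiv0$ the Ericksen stress $-\div(\nabla d\odot\nabla d)$ vanishes and \eqref{SEL} reduces to the steady Navier--Stokes system \eqref{SNS} subject to $|u|\le\varepsilon_0/|x|$, whereupon I would quote the stated Liouville results verbatim: the theorem of \cite{BangGuiLiuWangXie23} gives $u\equiv0$ for $n\ge4$, and the Landau classification of \cite{Landau44,Sverak11,TianXin98} gives that $u$ is $0$ or a Landau solution for $n=3$. As a preliminary I would promote the scaling-invariant bound \eqref{small1} to all derivatives: rescaling $(u,d,p)\mapsto\bigl(r\,u(r\,\cdot),\,d(r\,\cdot),\,r^2 p(r\,\cdot)\bigr)$ sends each dyadic annulus to the unit annulus with data of size $O(\varepsilon_0)$, so small-data elliptic estimates for the coupled system yield $|\nabla^k u|\le C_k\varepsilon_0|x|^{-1-k}$ and $|\nabla^{k+1}d|\le C_k\varepsilon_0|x|^{-1-k}$; these justify every integration by parts on $\R^n\setminus\{0\}$ below.

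The concrete mechanism forcing $d$ constant is a Bochner identity for the energy density $e:=|\nabla d|^2$. Using $|d|=1$ (so $d\cdot\nabla d=0$), the equation in the form $\Delta d=-e\,d+u\cdot\nabla d$, and $\div u=0$, one computes
\begin{equation*}
\tfrac12\Delta e-\tfrac12\,u\cdot\nabla e=|\nabla^2 d|^2-e^2+(\partial_k u_l)\,(\partial_l d\cdot\partial_k d).
\end{equation*}
Dropping $|\nabla^2 d|^2\ge0$, estimating the last term by $n|\nabla u|\,e$, and inserting $e\le\varepsilon_0^2|x|^{-2}$ together with $|\nabla u|\le C\varepsilon_0|x|^{-2}$ produces the differential inequality
\begin{equation*}
-\Delta e+u\cdot\nabla e\le\frac{C\varepsilon_0}{|x|^2}\,e\quad\text{on }\ \R^n\setminus\{0\},\qquad 0\le e\le\frac{\varepsilon_0^2}{|x|^2}.
\end{equation*}
The structural gain is that $e$ is a nonnegative, $O(|x|^{-2})$-decaying subsolution of a Schr\"odinger operator whose potential $C\varepsilon_0|x|^{-2}$ is a Hardy potential with coefficient below the sharp constant $\bigl(\tfrac{n-2}{2}\bigr)^2$ once $\varepsilon_0$ is small; this sub-critical Hardy regime is exactly where the smallness hypothesis is consumed, and it rules out any concentration of $e$, so that (with $\varepsilon$-regularity) the normalized energy $E(r):=r^{2-n}\int_{B_r}|\nabla d|^2$ is finite and well defined for all $r>0$.

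To pass from ``$e$ cannot concentrate'' to ``$e\equiv0$'' I would argue globally. The stress--energy (Pohozaev) computation for the $d$-equation gives a forcing-corrected monotonicity formula
\begin{equation*}
E'(r)=2\,r^{2-n}\!\int_{\partial B_r}\!\bigl|\partial_\nu d\bigr|^2\,d\sigma-2\,r^{1-n}R(r),\qquad |R(r)|\le C\varepsilon_0\!\int_{B_r}\!|\nabla d|^2,
\end{equation*}
so that $r^{C\varepsilon_0}E(r)$ is monotone and $E$ is bounded by \eqref{small1}. This identifies homogeneous tangent profiles $d_0(x)=\phi(x/|x|)$ at the origin and at infinity, with $\phi\colon\mathbb{S}^{n-1}\to\mathbb{S}^{n-1}$ harmonic and $|\nabla_{\mathbb{S}^{n-1}}\phi|\le\varepsilon_0$. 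Since the first nonzero eigenvalue of the Laplacian on $\mathbb{S}^{n-1}$ is $n-1$, a nonconstant harmonic map carries a definite amount of energy, so the smallness forces $\phi$ to be constant; both tangent profiles being constant then pins $E$ to the single value $0$, whence $e\equiv0$ and $d$ is constant.

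The step I expect to be the main obstacle is controlling the convective forcing $R(r)$ generated by $u\cdot\nabla d$: because every bound in the problem is scaling invariant, the correction $2r^{1-n}R(r)$ is of the same homogeneity as $E'(r)$ and pollutes the monotonicity at every dyadic scale simultaneously, so its cumulative effect over infinitely many scales must be absorbed purely by the smallness of $\varepsilon_0$ rather than by decay. This is most delicate in the borderline dimensions $n=3,4$, where the naive power barrier $|x|^{-\gamma}$ matching the decay rate only reproduces the a priori bound and cannot be iterated down to zero, forcing one to run the tangent-map and Hardy estimates in integral form with logarithmic dyadic cutoffs; this is precisely the borderline that the theorem flags as improvable only when $n=4$. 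Once $d$ is constant, the reduction to \eqref{SNS} and the cited classifications close both cases.
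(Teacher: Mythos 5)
Your overall architecture --- first force $d$ to be constant, then reduce \eqref{SEL} to \eqref{SNS} and quote Navier--Stokes Liouville theorems --- is the same as the paper's, and your Bochner identity and the almost-monotonicity $E'(r)\ge -C\varepsilon_0\,r^{-1}E(r)$ for $E(r)=r^{2-n}\int_{B_r}|\nabla d|^2$ are correct as stated. The genuine gap is that this mechanism cannot be closed in the dimensions that matter, and you acknowledge the obstacle without resolving it. Because the correction term is exactly scale-invariant, the ``monotone'' quantity $r^{C\varepsilon_0}E(r)$ carries no information across infinitely many scales: since $E$ is bounded by \eqref{small1}, $r^{C\varepsilon_0}E(r)\to 0$ automatically as $r\to 0$, while at infinity it is merely nondecreasing and permitted to grow like $r^{C\varepsilon_0}$; hence even if both tangent profiles were constant, a function that is monotone, vanishes in the limit at one end, and may be unbounded at the other is not forced to vanish identically --- the step ``pins $E$ to the single value $0$'' does not follow. (The same degeneracy obstructs the blow-up compactness needed to produce homogeneous tangent profiles in the first place, since the scale-invariant error survives every rescaling.) Your Hardy/barrier formulation fails in the same place: the positive homogeneous solutions of $-\Delta-C\varepsilon_0|x|^{-2}$ are $|x|^{-\alpha_\pm}$ with $\alpha_-\approx C\varepsilon_0/(n-2)$ and $\alpha_+\approx n-2$, and a comparison argument on $A_{r,R}$ with boundary data $\varepsilon_0^2r^{-2}$ and $\varepsilon_0^2R^{-2}$ closes as $r\to0$, $R\to\infty$ only when $\alpha_+>2$, i.e.\ only when $n\ge 5$. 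So your sketch can be completed for $n\ge5$, but in $n=3,4$ it only reproduces the a priori bound, and ``logarithmic dyadic cutoffs'' is a placeholder, not an argument.

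The paper closes exactly this loop by a different, purely linear mechanism: a Campanato-type iteration in Lorentz spaces. Write $d=\widehat d+e$ on $B_R$, where $\widehat d$ is the Newtonian potential of the truncated nonlinearity $(u\cdot\nabla d+|\nabla d|^2d)\chi_{B_R}$, which by Riesz-potential estimates is small by a factor $C\varepsilon_0$ relative to the scale-invariant Lorentz norm of $\nabla d$ on $B_R$ (estimate \eqref{lorentz-est2}), and where $e$ is harmonic across the origin, so $\nabla e$ gains a fixed linear factor $\theta$ on $B_{\theta R}$. This yields \eqref{lorentz-est5}, of the form $(\text{scaled norm on }B_{\theta R})\le\big(C\theta+C\varepsilon_0\theta^{2-n/p}\big)(\text{scaled norm on }B_R)$: the smallness $\varepsilon_0$ only has to beat the single fixed constant $\theta^{n/p-3/2}$, after which iteration produces genuine geometric decay $(r/R)^{1/2}$ uniformly for all $n\ge 3$, and letting $R\to\infty$ kills $\nabla d$. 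In other words, the scale-invariant pollution you identified is absorbed multiplicatively against a linear gain at each scale, so errors compound geometrically rather than additively --- this is the missing idea. Separately, your $n=3$ endgame has a citation error: the Landau classification of \cite{Landau44}, \cite{Sverak11}, \cite{TianXin98} concerns $(-1)$-homogeneous solutions, and your $u$ is not known to be homogeneous; what is needed (and what the paper uses) is the perturbative rigidity result of \cite[Corollary 1.5]{MiuraTsai2012}, which states that a solution of \eqref{SNS} in $\R^3\setminus\{0\}$ with sufficiently small scale-invariant bound must be a Landau solution.
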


When dealing with self-similar solutions,
we can improve Theorm \ref{main_small} and obtain

\begin{theorem} \label{main_sym}
For $n\ge 3$, let $(u, d)\in C^\infty(\R^n\setminus\{0\}, \R^n\times\mathbb S^{n-1})$ be a self-similar solution of
the system \eqref{SEL}.
If, in addition,
\begin{equation}\label{small22}
|\nabla d(x)|<\frac{1}{2|x|},  \ x\in \R^n\setminus\{0\}.
\end{equation}
Then i) if $n\ge 4$, then $u\equiv 0$ and $d$ is a constant map, and\\
ii) if $n=3$, then $u$ is either $0$ or a Laudau solution of \eqref{SNS}, and $d$ is a constant map.
\end{theorem}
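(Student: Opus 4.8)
The plan is to prove first that the director field $d$ is necessarily constant, using the reduced director equation on $\mathbb{S}^{n-1}$ together with the sharp bound \eqref{small22}, and then to read off the velocity from the known rigidity for \eqref{SNS}. By self-similarity I write $r=|x|$, $\omega=x/|x|$, $d(x)=D(\omega)$ and $u(x)=r^{-1}U(\omega)$ with $D\in C^\infty(\mathbb{S}^{n-1},\mathbb{S}^{n-1})$ and $U\in C^\infty(\mathbb{S}^{n-1},\R^n)$. Since $d$ depends only on $\omega$, one has $\Delta d=r^{-2}\Delta_{\mathbb{S}^{n-1}}D$, $|\nabla d|^2=r^{-2}|\nabla_{\mathbb{S}^{n-1}}D|^2$ and $u\cdot\nabla d=r^{-2}\nabla_{U_\tau}D$, where $U_\tau$ is the tangential part of $U$ (the radial part of $U$ does not differentiate $D$). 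Hence the director equation in \eqref{SEL} reduces to
\[
\Delta_{\mathbb{S}^{n-1}}D+|\nabla_{\mathbb{S}^{n-1}}D|^2D=\nabla_{U_\tau}D\quad\text{on }\mathbb{S}^{n-1},
\]
while \eqref{small22} becomes $|\nabla_{\mathbb{S}^{n-1}}D|<\tfrac12$ on $\mathbb{S}^{n-1}$.

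The crucial observation is that the transport term $\nabla_{U_\tau}D$ is a pure first-order drift with no zeroth-order part, so the reduced equation obeys a maximum principle. Writing $M:=\max_{\mathbb{S}^{n-1}}|\nabla_{\mathbb{S}^{n-1}}D|<\tfrac12$ and integrating $|\nabla_{\mathbb{S}^{n-1}}D|$ along minimizing geodesics (of length at most $\mathrm{diam}\,\mathbb{S}^{n-1}=\pi$) gives $\dist(D(\omega),D(\omega'))\le M\pi<\tfrac\pi2$ for all $\omega,\omega'$. Thus the image $D(\mathbb{S}^{n-1})$ has diameter strictly less than $\pi/2$ and lies in an open hemisphere $\{y\in\mathbb{S}^{n-1}:y\cdot e>0\}$ for a suitable $e$. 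Setting $\psi:=1-D\cdot e$, so that $\psi\ge0$ and $1-\psi=D\cdot e>0$, and using $(\nabla_{U_\tau}D)\cdot e=\nabla_{U_\tau}(D\cdot e)=-\nabla_{U_\tau}\psi$, the reduced equation yields
\[
\Delta_{\mathbb{S}^{n-1}}\psi-\nabla_{U_\tau}\psi=|\nabla_{\mathbb{S}^{n-1}}D|^2\,(1-\psi)\ge0.
\]
Therefore $\psi$ is a subsolution of the uniformly elliptic operator $L:=\Delta_{\mathbb{S}^{n-1}}-\nabla_{U_\tau}$, which has smooth coefficients and no zeroth-order term. On the closed connected manifold $\mathbb{S}^{n-1}$ the function $\psi$ attains its maximum, so the strong maximum principle forces $\psi\equiv\mathrm{const}$. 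Then the left-hand side above vanishes identically, and since $1-\psi>0$ we conclude $\nabla_{\mathbb{S}^{n-1}}D\equiv0$; that is, $d$ is a constant map.

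Once $d$ is constant, $\div(\nabla d\odot\nabla d)\equiv0$ and \eqref{SEL} collapses to the steady Navier--Stokes system \eqref{SNS}, of which $u$ is a self-similar solution with $|u(x)|=|U(\omega)|/|x|\le C|x|^{-1}$, $C=\max_{\mathbb{S}^{n-1}}|U|$. For $n\ge4$ the Liouville theorem of \cite{BangGuiLiuWangXie23} gives $u\equiv0$; for $n=3$ the classification of self-similar solutions in \cite{Landau44,Sverak11,TianXin98} shows $u$ is either $0$ or a Landau solution. This establishes both conclusions i) and ii).

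The step I expect to require the most care is the passage through the maximum principle above: one must verify that, after the self-similar reduction, the coupling $u\cdot\nabla d$ contributes only a drift (so that $L$ genuinely has vanishing zeroth-order coefficient and the Hopf strong maximum principle applies), and that the image lands in an \emph{open} hemisphere. The constant $\tfrac12$ is sharp here, being exactly the largest value for which $M\pi<\pi/2$; a weaker bound on $|\nabla d|$ need not confine the image to a hemisphere, and indeed nonconstant harmonic maps $\mathbb{S}^{n-1}\to\mathbb{S}^{n-1}$ (for which $|\nabla_{\mathbb{S}^{n-1}}D|$ is of order $\sqrt{n-1}$) would then be admissible, so the geometric confinement---not a smallness-of-energy estimate---is what drives the rigidity.
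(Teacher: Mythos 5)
Your proof is correct and follows essentially the same route as the paper: reduce by self-similarity to the advective harmonic map equation $\Delta_{\mathbb{S}^{n-1}}d+|\nabla_{\mathbb{S}^{n-1}}d|^2d=v\cdot\nabla_{\mathbb{S}^{n-1}}d$ on the sphere, use $|\nabla_{\mathbb{S}^{n-1}}d|<\tfrac12$ to confine the image to an open hemisphere, kill $\nabla d$ via the strong maximum principle for the zeroth-order-free drift operator $\Delta_{\mathbb{S}^{n-1}}-v\cdot\nabla$, and then invoke the known rigidity for \eqref{SNS}. The only (cosmetic) difference is the auxiliary function fed to the maximum principle: the paper composes $d$ with the strictly convex squared-distance function $\dist^2_{\mathbb{S}^{n-1}}(\cdot,(0',1))$ (citing Jost), whereas you dot the equation with a fixed vector $e$ and use the height function $1-d\cdot e$, which avoids any appeal to convexity of the distance function and is slightly more self-contained.
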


\medskip
 When restricted to dimension $n=4$, by adapting the energy method from \cite{{BangGuiLiuWangXie23}}, we can relax the smallness assumption \eqref{small1} and obtain

\begin{theorem} 
\label{4Dthm}
There exists a constant $\varepsilon_1>0$ such that if $(u,d)\in C^\infty(\mathbb{R}^4\setminus \{0\}, \mathbb{R}^4 \times \mathbb{S}^3)$ is a solution to \eqref{SEL} in $\mathbb{R}^4\setminus \{0\}$ satisfying
\begin{equation}
\label{main_cond213}
|u(x)|\le \frac{C}{|x|},\ \  \ |\nabla d(x)|<\frac{\varepsilon_1}{|x|}, \ \  \ \forall x\in\mathbb R^4\setminus\{0\},
\end{equation}
for some constant $C>0$, then $u\equiv 0$ and $d$ is a constant map. 
\end{theorem}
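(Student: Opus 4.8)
\emph{Plan.} Throughout fix $\varepsilon_1\le\varepsilon_0$, where $\varepsilon_0$ is the constant from Theorem \ref{main_small}. Then the whole difficulty is to prove $u\equiv0$: once this is known, the pair $(0,d)$ solves \eqref{SEL} on $\R^4\setminus\{0\}$ (with pressure $p=-\tfrac12|\nabla d|^2+\mathrm{const}$, as one checks using $\Delta d+|\nabla d|^2d=0$ and $|d|\equiv1$) and satisfies \eqref{small1} with constant $\varepsilon_1\le\varepsilon_0$, so part i) of Theorem \ref{main_small} immediately gives that $d$ is constant. The plan for $u$ is to run the weighted energy method of \cite{BangGuiLiuWangXie23} for the steady Navier--Stokes system, treating the Ericksen stress $-\div(\nabla d\odot\nabla d)$ as a forcing that is quadratically small in $\varepsilon_1$ because $|\nabla d|<\varepsilon_1/|x|$.

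First I would upgrade the $C^0$ bounds \eqref{main_cond213} to scaling-invariant derivative bounds. For $R>0$ set $u_R(y)=Ru(Ry)$, $d_R(y)=d(Ry)$ and $p_R(y)=R^2 p(Ry)$; these solve \eqref{SEL} on the fixed annulus $\{\tfrac12<|y|<2\}$ with $|u_R|\le C$ and $|\nabla d_R|<\varepsilon_1$. After normalizing $p_R$ by its average on the annulus (it is determined up to a constant and controlled through the elliptic equation for the pressure obtained by taking the divergence of the momentum equation), interior regularity for the steady Navier--Stokes system and Schauder estimates for the harmonic-map equation give uniform bounds on $\nabla u_R$, $p_R$ and $\nabla^2 d_R$ on $\{|y|=1\}$. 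Undoing the scaling yields $|\nabla u(x)|\le C|x|^{-2}$, $|p(x)|\le C|x|^{-2}$ and $|\nabla^2 d(x)|\le C|x|^{-2}$, so every term in the energy identity below has a definite homogeneity.

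Second, testing the momentum equation of \eqref{SEL} with $u\phi^2$, where $\phi$ is a cutoff equal to $1$ on $B_R\setminus B_\rho$ and supported in $B_{2R}\setminus B_{\rho/2}$, and using $\div u=0$ to eliminate the leading convection and pressure contributions, gives
\begin{equation*}
\int\phi^2|\nabla u|^2=\int|u|^2\big(|\nabla\phi|^2+\phi\Delta\phi\big)+\int|u|^2\phi\,(u\cdot\nabla\phi)+2\int p\,\phi\,(u\cdot\nabla\phi)+\int\phi^2(\nabla d\odot\nabla d):\nabla u+2\int\phi\,(\nabla d\odot\nabla d):(u\otimes\nabla\phi).
\end{equation*}
By the scale-invariant bounds, each term built from the cutoff is pointwise $O(|x|^{-4})$ and $\int_{\{|x|\sim R\}}|x|^{-4}=O(1)$ per dyadic annulus in $\R^4$, while the two Ericksen terms carry the factor $\sup(|x|^2|\nabla d|^2)\le\varepsilon_1^2$. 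I would then follow \cite{BangGuiLiuWangXie23} and iterate this identity over the dyadic decomposition $\{2^k\le|x|\le 2^{k+1}\}$, exploiting the exact algebraic cancellations of the Navier--Stokes structure to convert it into genuine decay of the annular energy and hence $\int_{\R^4}|\nabla u|^2<\infty$; the smallness of $\varepsilon_1$ guarantees that the coupling terms, being an $\varepsilon_1^2$-fraction of the quantities already present, are absorbed without upsetting this borderline balance. Letting $\rho\to0$ and $R\to\infty$ then forces $\nabla u\equiv0$, and the decay in \eqref{main_cond213} identifies the constant as $0$, i.e. $u\equiv0$.

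The main obstacle is exactly this last step: $n=4$ is the critical dimension in which the Dirichlet integral $\int_{B_R}|\nabla u|^2$ is only logarithmically convergent, so the vanishing of $u$ cannot be read off from a single application of the identity and the energy method of \cite{BangGuiLiuWangXie23} must be reproduced scale by scale. One has to verify that the Ericksen forcing, although of the same critical homogeneity as the Navier--Stokes terms, enters with the small prefactor $\varepsilon_1^2$ and can be absorbed without destroying the cancellations that make the borderline estimate close.
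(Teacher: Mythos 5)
Your reduction of the $d$-part to Theorem \ref{main_small} (once $u\equiv 0$ is known) is fine, and your first step --- upgrading \eqref{main_cond213} to scale-invariant bounds on $\nabla u$, $p$, $\nabla^2 d$ by rescaling --- is exactly the paper's Lemma \ref{lemma1}. The genuine gap is in the step you yourself flag as the main obstacle: the perturbative absorption of the Ericksen stress cannot close in the critical dimension $n=4$. In your Caccioppoli identity the coupling term $\int\phi^2(\nabla d\odot\nabla d):\nabla u$ is supported on the whole of $B_{2R}\setminus B_{\rho/2}$, not just on the two cutoff transition shells, and on each dyadic annulus $\{|x|\sim 2^k\}$ it is of size $\varepsilon_1^2\,C$ (the integrand has exactly the critical homogeneity $|x|^{-4}$, and $C$ is the arbitrary, possibly large constant in $|u|\le C/|x|$). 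Summing over the $\sim\log(R/\rho)$ dyadic annuli gives $O\big(\varepsilon_1^2 C\log(R/\rho)\big)$, which diverges as $\rho\to0$, $R\to\infty$ for every fixed $\varepsilon_1>0$; absorbing by Cauchy--Schwarz instead leaves $\int\phi^2|\nabla d|^4=O\big(\varepsilon_1^4\log(R/\rho)\big)$, again divergent. So your identity yields at best $\int_{B_R\setminus B_\rho}|\nabla u|^2\lesssim 1+\varepsilon_1^4\log(R/\rho)$: neither finiteness of the Dirichlet integral nor decay of annular energies follows, and no choice of small $\varepsilon_1$ repairs this. Smallness per annulus is not the right currency, because the coupling term does not decay dyadically; ``being an $\varepsilon_1^2$-fraction of the quantities already present'' is precisely what fails, since the quantity it would have to be a fraction of is the one whose finiteness you are trying to prove.

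The paper closes this step by an exact structural cancellation rather than absorption. Test the director equation by its own tension field $\Delta d+|\nabla d|^2 d$ (not the momentum equation alone by $u\phi^2$), add it to the momentum equation tested by $u$, and use $\div(\nabla d\odot\nabla d)=\Delta d\cdot\nabla d+\nabla(\tfrac12|\nabla d|^2)$ together with $d\cdot\nabla d=0$: the cross terms $(u\cdot\nabla d)\cdot\Delta d$ cancel identically, leaving the pointwise identity
\begin{equation*}
|\nabla u|^2+\big|\Delta d+|\nabla d|^2 d\big|^2=\Delta\big(\tfrac12|u|^2\big)-u\cdot\nabla H,\qquad H=\tfrac12|\nabla d|^2+\tfrac12|u|^2+p,
\end{equation*}
whose right-hand side is a divergence (as $\div u=0$). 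Integrating over $A_{r,R}$ produces only the boundary terms $h(R)-h(r)$; by the Lemma \ref{lemma1} decay, $h$ is bounded, and the identity shows $h$ is monotone, so $\int_{\R^4}\big(|\nabla u|^2+|\Delta d+|\nabla d|^2d|^2\big)<\infty$ with no smallness and no dyadic iteration. This also repairs your secondary gap: finiteness alone does not force vanishing (your cutoff terms are $O(1)$, not $o(1)$, as $R\to\infty$), so the paper then uses $\int_{\R^4}|u|^4<\infty$ (Sobolev) and Fubini to select radii $r_i\to 0$, $R_i\to\infty$ along which $h\to 0$, whence the integral is zero, giving $u\equiv 0$ and, simultaneously, that $d$ is harmonic. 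The smallness of $\varepsilon_1$ enters only in the final Liouville theorem for harmonic maps (Lemma \ref{liouville}, or equivalently your appeal to Theorem \ref{main_small} with $u\equiv0$); this separation is also why the theorem tolerates an arbitrary constant $C$ in front of $|u|$, which a perturbative scheme that multiplies $\varepsilon_1^2$ by $C$ could never do.
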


\begin{remark} The constant $\varepsilon_1$ in \eqref{main_cond213} of Theorem \ref{4Dthm} can not be removed, since any nontrivial harmonic map
$\phi:\mathbb S^3\to\mathbb S^3$  induces a nontrivial solution $(0, \phi(\frac{x}{|x|}))$ of \eqref{SEL}. In fact, we can choose 
$$
\varepsilon_1:=
\min\Big\{\big\|\nabla\phi\big\|_{C^0(\mathbb S^3)}\ \big|\ 
\phi\in C^\infty(\mathbb{S}^3,\mathbb{S}^3) 
\mbox{ is a nontrivial harmonic map}\Big\}>0.
$$
\end{remark}

\medskip
This paper is organized as follows. Theorem \ref{2Dthm}, is proved in Section \ref{2d}, while  Theorems \ref{main_small} and \ref{main_sym}, are proved in Sections \ref{sec_small} and \ref{sec_sym} respectively. Theorem \ref{4Dthm} is proved in Section \ref{sec_4d}.

\section{Proof of Theorem \ref{2Dthm}}
\label{2d}
 This section is devoted to the proof of Theorem \ref{2Dthm}. 
 
 Let $(r,\theta)$ denote the polar coordinate in $\mathbb{R}^2$, and $\{e_r,e_\theta\}$ be the standard base of the polar coordinate. 
Let $(u,p,d)$ be a self-similar solution to \eqref{SEL} in $\mathbb{R}^2\setminus \{0\}$, with $d(\mathbb{R}^2)\subset\mathbb{S}^1$. One can decompose the solution $(u,p,d)$
into
    \begin{align*}
        u=\frac{f(\theta) e_r + v (\theta)e_\theta}{r},\quad p=\frac{q(\theta)}{r^2},
        \quad
        d=\cos (\xi(\theta)) e_r + \sin(\xi (\theta)) e_\theta,
    \end{align*}
    where $f,v,q$ are $2\pi$-periodic and
  $\xi(\theta+2\pi)=\xi(\theta)$ (mod $2\pi$).
By direct calculations, the equation \eqref{SEL} becomes
    \begin{align}
    \label{SSeq}
    \left\{
    \begin{aligned}
        -f''+vf'-f^2-v^2-2q &= -(\xi'+1)^2,
        \\
        q'-2f'&=
        \left( 
        (\xi'+1)^2
        \right)',
        \\
        v'=0,
        \ \ \xi''-v(\xi'+1)
        &=0,
    \end{aligned}
    \right.
    \end{align}
where $'$ denotes differentiation with respect to $\theta$.

\begin{proof}[Proof of Theorem \ref{2Dthm}]
To solve \eqref{SSeq},
we divide the discussion into two cases:

\smallskip
\noindent\emph{Case 1) $v=0$.} Then, from the fourth equation in
\eqref{SSeq}, we have that $\xi''=0$ so that
$
        \xi(\theta)= m\theta+ \theta_1
$
for some integer $m\in\mathbb{Z}$ and $\theta_1\in \mathbb{R}$. The second equation in \eqref{SSeq} implies that
$
        q=2f+C_1
$
for some constant $C_1\in \mathbb{R}$. Substituting
these into the first equation of \eqref{SSeq}, one can get
    \begin{align} 
    \label{f_eq}
        -f''-f^2-4f=2C_1-(m+1)^2.
    \end{align}
Note that the constant $2C_1-(m+1)^2$ is not a given data but one of the unknowns.
The equation \eqref{f_eq} is equivalent to the one of $\varphi$ on page 960 in \cite{GuillodWittwer15SIAM} via $\varphi=-f, a=0$. 
Note that \eqref{f_eq} always admits a constant solution $f$
by choosing an appropriate constant $C_1$. 
On other hand, 
there exists a nontrivial solution $f=f_{\Phi,k}$  to \eqref{f_eq}, with minimal period $\frac{2\pi}{k}$, satisfying
    \begin{align*}
        \int_0^{2\pi} f_{\Phi,k} (\theta) \, d\theta =\Phi.
    \end{align*}
   if and only if $4+\frac{\Phi}{\pi}\leq k^2, k\in \mathbb{N}$. 
   See equation (2.10) in pages 960-961 of \cite{GuillodWittwer15SIAM} for details.
The value of $C_1$ can be found by integrating  \eqref{f_eq} over $(0, 2\pi)$:
    \begin{align*}
        C_1= -\frac{1}{4\pi} \int_0^{2\pi} f^2(\theta) \,d\theta
        -\frac{2\Phi}{\pi} +\frac{1}{2}(m+1)^2.
    \end{align*}
Thus we obtain 
    \begin{align*}
        q(\theta) =
        2f(\theta)
        -\frac{1}{4\pi} \int_0^{2\pi} f^2(\tau) d\tau 
        -\frac{2\Phi}{\pi} 
        +\frac{1}{2}(m+1)^2.
    \end{align*}

\medskip
\noindent\emph{Case 2) $v\neq 0$.} In this case, we claim
that $\xi'+1=0$, for, otherwise, it follows from the last two equations of \eqref{SSeq} that
    \begin{align*}
        \xi(\theta)=\frac{c}{v}e^{v\theta}-\theta+b
    \end{align*}
for some constants $c,b\in \mathbb{R}$. This contradicts to $\xi(\theta)-\xi(\theta+2\pi)=0$ (mod $2\pi$). Therefore
$
        \xi(\theta)=-\theta+\theta_1
$
for some constant $\theta_1\in \mathbb{R}$.
The second equation of \eqref{SSeq} implies
    \begin{align}
    \label{peq474}
        q=2f+\frac{C}{2}
    \end{align}
    for some constant $C\in \mathbb{R}$. The
the first equation of \eqref{SSeq}  reduces to
    \begin{align}
    \label{feq479}
        f''-vf'+f^2+4f+v^2+C=0
    \end{align}
for some constant $C\in \mathbb{R}$. 

Multiplying \eqref{feq479} by $f'$ and integrating
over $(0,2\pi)$, and using the $2\pi$-periodicity of $f$, we obtain
$$\int_0^{2\pi} (f'(\theta))^2\,d\theta=0.$$
This yields that $f$ is a constant. Hence \eqref{feq479} implies that $C=- f^2 -4f -v^2$  so that by \eqref{peq474} we have 
that  $q=-\frac{1}{2} (f^2+v^2).$
This finishes the proof.
\end{proof}

\section{Proof for Theorem \ref{main_small}}
\label{sec_small}

This section is devoted to the proof of Theorem 
\ref{main_small}. 
First observe that by the definition
of $L^{n,\infty}$, the condition \eqref{small1} implies that $(u,\nabla d)\in L^{n,\infty}(\R^n)$, and
\begin{align}\label{small2}
\big\|(u,\nabla d)\big\|_{L^{n,\infty}(\R^n)}\le C\varepsilon_0.
\end{align}

For $R>0$, denote by $\chi_{B_R}$ the characteristic function of $B_R$. Let
$$\mathbb P: L^2(\R^n,\R^n)\to L^2_{\rm{div}}(\R^n,\R^n)$$
denote the Leray projection operator, see \cite{Galdi11}. Denote by $G$ the fundamental solution of $-\Delta$ in $\R^n$.
Define $(\widehat{u},\widehat{d}):\R^n\to\R^n\times\R^n$ by letting
\begin{equation*}
\begin{cases}
\displaystyle\widehat{u}(x)=\int_{\R^n} \nabla_y G(x-y) \mathbb P\big((u\otimes u+\nabla d\odot\nabla d)\chi_{B_R}\big)(y) \,dy,\\
\displaystyle\widehat{d}(x)=-\int_{\R^n} G(x-y)\big((u\cdot\nabla d+|\nabla d|^2 d)\chi_{B_R}\big)(y)\,dy.
\end{cases}
\end{equation*}
Then, by the Reisz potential estimates between Lorentz spaces (see \cite{Ziemer1989}), we have that for any $1<p<\frac{n}2$, 
$\big(\ \widehat{u},\nabla\widehat{d}\ \big)\in L^{\frac{np}{n-p}, p}(\R^n)$, along with the following estimates: 
\begin{align}\label{lorentz-est1}
\displaystyle\big\|\widehat{u}\big\|_{L^{\frac{np}{n-p}, p}(\R^n)}
&\leq C\big\|\mathbb P\big((u\otimes u+\nabla d\odot\nabla d)\chi_{B_R}\big)\big\|_{L^{p,p}(\R^n)}\nonumber\\
&\leq C \big\||u|^2+|\nabla d|^2\big\|_{L^{p}(B_R)}\nonumber\\
&\leq C \big\|u\big\|_{L^{n,\infty}(B_R)} \big\|u\big\|_{L^{\frac{np}{n-p},p}(B_R)}+
 \big\|\nabla d\big\|_{L^{n,\infty}(B_R)} \big\|\nabla d\big\|_{L^{\frac{np}{n-p},p}(B_R)}\nonumber\\
&\le C\varepsilon_0\big(\big\|u\big\|_{L^{\frac{np}{n-p},p}(B_R)}+\big\|\nabla d\big\|_{L^{\frac{np}{n-p},p}(B_R)}\big),
\end{align}
and
\begin{align}\label{lorentz-est2}
\displaystyle\big\|\nabla\widehat{d}\big\|_{L^{\frac{np}{n-p}, p}(\R^n)}
&\leq C \big\|(u\cdot\nabla d+|\nabla d|^2 d)\chi_{B_R}\big)\big\|_{L^{p,p}(\R^n)}\nonumber\\
&\leq C \big\||u||\nabla d|+|\nabla d|^2\big\|_{L^{p}(B_R)}\nonumber\\
&\leq C\big(\big\|u\big\|_{L^{n,\infty}(B_R)} +\big\|\nabla d\big\|_{L^{n,\infty}(B_R)}\big)\big\|\nabla d\big\|_{L^{\frac{np}{n-p},p}(B_R)}\nonumber\\
&\le C\varepsilon_0\big\|\nabla d\big\|_{L^{\frac{np}{n-p},p}(B_R)}.
\end{align}
From the definition, we see
that $(\widehat{u}, \widehat{d})$ is a weak solution of  
\begin{equation}\label{SEL2}
\begin{cases}
-\Delta \widehat{u}+\nabla \widehat{p}=-\nabla\cdot(u\otimes u+\nabla d\odot\nabla d),\ \nabla\cdot \widehat{u}=0,\\
\Delta \widehat{d}=-|\nabla d|^2 d-u\cdot\nabla d,
\end{cases}
\ \ {\rm{in}}\ \ B_R.
\end{equation}
If we define $v=u-\widehat{u}$, $q=p-\widehat{p}$,  and $e=d-\widehat{d}$, then $(v, q, e)$ is a weak solution of 
\begin{equation}\label{SEL3}
\begin{cases}
-\Delta v+\nabla q=0,\
\nabla\cdot v=0,\\
\Delta e=0.
\end{cases}
\ \ {\rm{in}}\ \ B_R\setminus\{0\}.
\end{equation}
One can verify that for any $n\ge 3$, $d$ is, in fact, a  weak solution of 
$$
\Delta d+|\nabla d|^2 d=u\cdot\nabla d\ \ {\rm{in}}\ \  \R^n.
$$
Hence for any $n\ge 3$, $e$ solves $\Delta e=0$ weakly in $B_R$. From the regularity theory of the Laplace equation,  
$e\in C^\infty(B_R)$ and, for
any $0<\theta<1$, 
\begin{align}\label{lorentz-est4}
&(\theta R)^{1-\frac{n-p}{p}}\big\|\nabla e\big\|_{L^{\frac{np}{n-p},p}(B_{\theta R})}
\le C\theta R^{1-\frac{n-p}{p}}\big\|\nabla e\big\|_{L^{\frac{np}{n-p},p}(B_{R})}\nonumber\\
&\le C\theta \Big(R^{1-\frac{n-p}{p}}\big\|\nabla d\big\|_{L^{\frac{np}{n-p},p}(B_{R})}
+R^{1-\frac{n-p}{p}}\big\|\nabla\widehat{d}\big\|_{L^{\frac{np}{n-p},p}(B_{R})}\Big)\nonumber\\
&\le C\theta \Big(R^{1-\frac{n-p}{p}}\big\|\nabla d\big\|_{L^{\frac{np}{n-p},p}(B_{R})}
+C\varepsilon_0R^{1-\frac{n-p}{p}}\big\|\nabla d\big\|_{L^{\frac{np}{n-p},p}(B_{R})}\Big)\nonumber\\
&\le C\theta R^{1-\frac{n-p}{p}}\big\|\nabla d\big\|_{L^{\frac{np}{n-p},p}(B_{R})},
\end{align}
where we have used \eqref{lorentz-est2}. Combining \eqref{lorentz-est2} and \eqref{lorentz-est4}, we obtain that 
for $n\ge 3$, and for any $0<\theta<1$, 
\begin{align}\label{lorentz-est5}
&(\theta R)^{1-\frac{n-p}{p}}\big\|\nabla d\big\|_{L^{\frac{np}{n-p},p}(B_{\theta R})}\nonumber\\
&\le C\theta R^{1-\frac{n-p}{p}}\big\|\nabla d\big\|_{L^{\frac{np}{n-p},p}(B_{R})}
+C\varepsilon_0\theta^{2-\frac{n}{p}} R^{1-\frac{n-p}{p}}\big\|\nabla d\big\|_{L^{\frac{np}{n-p},p}(B_{R})}\nonumber\\
&\le \theta^\frac12 R^{1-\frac{n-p}{p}}\big\|\nabla d\big\|_{L^{\frac{np}{n-p},p}(B_{R})},
\end{align}
provided $\theta$, $\varepsilon_0$ are chosen so that $0<\theta<\frac{1}{4C^2}$
and $2C\varepsilon_0\le \theta^{\frac{n}{p}-\frac32}$.

By iterating \eqref{lorentz-est5} $k$-times, we obtain that for any $n\ge 3$ and $\theta\in (0,1)$,
\begin{align}\label{lorentz-est6}
(\theta^k R)^{1-\frac{n-p}{p}}\big\|\nabla d\big\|_{L^{\frac{np}{n-p},p}(B_{\theta^k R})}
\le \theta^\frac{k}2 R^{1-\frac{n-p}{p}}\big\|\nabla d\big\|_{L^{\frac{np}{n-p},p}(B_{R})}.
\end{align}
This implies that for any $n\ge 3$,
\begin{align}\label{lorentz-est7}
r^{1-\frac{n-p}{p}}\big\|\nabla d\big\|_{L^{\frac{np}{n-p},p}(B_{r})}
&\le C\big(\frac{r}{R}\big)^\frac12 R^{1-\frac{n-p}{p}}\big\|\nabla d\big\|_{L^{\frac{np}{n-p},p}(B_{R})}\nonumber\\
&\le C\big(\frac{r}{R}\big)^\frac12\big\|\nabla d\big\|_{L^{n,\infty}(B_R)}\le C\varepsilon_0 \big(\frac{r}{R}\big)^\frac12
\end{align}
holds for for any $0<r\le \frac{R}2$.  Sending $R\to\infty$ in \eqref{lorentz-est7} implies that $\nabla d\equiv 0$ in $\R^n$ so that
$d$ is a constant map in $\R^n$.

From this, we deduce that $u\in C^\infty(\R^n\setminus\{0\})$ solves the Navier-Stokes equation in $\R^n\setminus\{0\}$. Since
$$
|u(x)|\le \frac{\varepsilon_0}{|x|}, \ \forall x\not=0,
$$
$u\equiv 0$ when $n\geq 4$. See \cite{BangGuiLiuWangXie23}. 
When $n=3$, $u$ is either 0 or a Landau solution, provided $\varepsilon_0>0$ is sufficiently small, according to \cite[Corollary 1.5]{MiuraTsai2012}. \qed

\section{Proof of Theorem \ref{main_sym}}
\label{sec_sym}

This section is devoted to the proof of Theorem
\ref{main_sym} on self-similar solutions of 
\eqref{SEL}. Using the spherical coordinates, we can write
$$
(u, p, d)(r,\theta)=\big(\frac{1}{r} u(\theta), \frac{1}{r^2} p(\theta), d(\theta)\big), \ \forall (r,\theta)\in \R_+\times \mathbb S^{n-1}.
$$
Denote by $v(\theta)=u(\theta)-\langle u(\theta), \theta\rangle\theta, \ \theta\in \mathbb S^{n-1},$the tangential component of
$u$ on $\mathbb S^{n-1}$. 
Then $d$ solves the equation of advective harmonic maps 
on $\mathbb S^{n-1}$:
\begin{equation}\label{ahm}
\Delta_{\mathbb S^{n-1}} d+|\nabla_{\mathbb S^{n-1}} d|^2 d=v\cdot\nabla_{\mathbb S^{n-1}} d \ \ {\rm{on}}\ \ \mathbb S^{n-1}.
\end{equation}
Moreover, by \eqref{small22}, $d$ satisfies
\begin{equation}\label{small3}
\big|\nabla_{\mathbb S^{n-1}} d(\theta)\big|<\frac12,\ \ \forall\theta\in\mathbb S^{n-1}. 
\end{equation}
Without loss of generality, we may assume $d(0',1)=(0',1)$ with $0'=(0,\cdots, 0)\in\mathbb R^{n-1}$.
It follows from \eqref{small3} that
\begin{equation}\label{halfsphere}
d(\mathbb S^{n-1})\subset \mathbb S^{n-1}_+=\Big\{x=(x', x_n)\in\mathbb S^{n-1}: \ x_n>0\Big\}.
\end{equation}
Let $\phi(\theta)={\rm{dist}}^2_{\mathbb S^{n-1}}\big(\theta, (0',1)\big)$ be the square of distance function on $\mathbb S^{n-1}$ between $\theta\in\mathbb S^{n-1}$ and the north pole $(0',1)$. It is well-known (see \cite{Jost1984}) that $\phi(\cdot)$ is a strictly convex function on $\mathbb S^{n-1}_+$, and by the chain rule we have that
\begin{align}\label{ahm1}
&\Delta_{\mathbb S^{n-1}} \phi(d(\theta))-v(\theta)\cdot\nabla_{\mathbb S^{n-1}} \phi(d(\theta))\nonumber\\
&={\rm{tr}}\Big(\nabla^2_{\mathbb S^{n-1}}\phi(d(\theta))\big(\nabla_{\mathbb S^{n-1}} d(\theta), \nabla_{\mathbb S^{n-1}} d(\theta)\big)\Big)\ge c_0|\nabla_{\mathbb S^{n-1}} d(\theta)|^2\quad{\rm{on}}\ \ \mathbb S^{n-1},
\end{align}
for a positive constant $c_0>0$. 
It follows from the strong maximum principle that $\phi(d)$ must be constant on $\mathbb S^{n-1}$, which, combined with \eqref{ahm1}, implies
that $\nabla_{\mathbb S^{n-1}} d\equiv 0$ on $\mathbb S^{n-1}$. Hence $d\equiv (0',1)$. 

Now we see that $u$ is a homogeneous ($-1$) solution of the Navier-Stokes equation in $\R^n\setminus\{0\}$. Hence by \cite{Sverak11}
that $u\equiv 0$ in $\R^n$ for $n\ge 4$, and $u$ is either $0$ or a Landau solution in $\R^3$ when $n=3$. \qed

\section{Proof of Theorem \ref{4Dthm}}
\label{sec_4d}
To prove Theorem \ref{4Dthm}, we first need a Lemma on estimates of derivatives of $u,p,d$.
\begin{lemma}
\label{lemma1} For $n\ge 2$,
let $(u,p,d)$ be a smooth solution to \eqref{SEL} in $\mathbb{R}^n\setminus \{0\}$, satisfying
    \begin{align}
    \label{cond510}
        |u(x)|\leq \frac{C_0}{|x|},
        \quad
        |\nabla d(x)|\leq \frac{C_0}{|x|}
        \quad 
        \text{in }\ \mathbb{R}^n\setminus \{0\}
    \end{align}
for some constant $C_0>0$. Then for any non-negative integer $k$, it holds that
    \begin{align}
    \label{main_est}
        |\nabla^k u(x)|\leq \frac{C_k}{|x|^{k+1}},
        \
        |\nabla ^k p(x)| \leq \frac{C_k}{|x|^{k+2}},
        \
        |\nabla^{k+1} d(x)|\leq \frac{C_k}{|x|^{k+1}} 
        \
        \text{in }\ \mathbb{R}^n\setminus \{0\}
    \end{align}
for some constants $C_k>0$.
\end{lemma}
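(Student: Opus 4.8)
The plan is to exploit the scale invariance of both the system \eqref{SEL} and the hypotheses \eqref{cond510}, reducing the weighted estimates \eqref{main_est} to uniform interior estimates on a fixed annulus. Fix $x_0\in\R^n\setminus\{0\}$ and set $r=|x_0|$. Define the rescaled triple
$$
\tilde u(y)=r\,u(ry),\quad \tilde p(y)=r^2 p(ry),\quad \tilde d(y)=d(ry),
$$
which again solves \eqref{SEL} on the annulus $A=\{\tfrac12\le |y|\le 2\}$ and, by \eqref{cond510}, satisfies the $r$-independent bounds $|\tilde u|\le 2C_0$ and $|\nabla\tilde d|\le 2C_0$ on $A$. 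Since $\nabla^k u(x_0)=r^{-(k+1)}\nabla^k\tilde u(x_0/r)$, $\nabla^{k+1}d(x_0)=r^{-(k+1)}\nabla^{k+1}\tilde d(x_0/r)$, and $\nabla^k p(x_0)=r^{-(k+2)}\nabla^k\tilde p(x_0/r)$, with $x_0/r\in\{|y|=1\}\subset\text{int}(A)$, it suffices to bound $|\nabla^k\tilde u|$, $|\nabla^{k+1}\tilde d|$, and $|\nabla^k\tilde p|$ by a constant depending only on $n,k,C_0$ on the middle sphere $\{|y|=1\}$. Undoing the scaling then yields \eqref{main_est}.

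Next I would run an elliptic bootstrap on the decoupled pieces of the system, over a nested sequence of annuli $A=A_0\supset A_1\supset\cdots$ shrinking toward $\{|y|=1\}$. Rewriting $u\cdot\nabla u=\div(u\otimes u)$ via $\div u=0$, the velocity--pressure pair solves the Stokes system
$$
-\Delta\tilde u+\nabla\tilde p=-\div(\tilde u\otimes\tilde u+\nabla\tilde d\odot\nabla\tilde d),\quad \div\tilde u=0,
$$
while $\tilde d$ solves $\Delta\tilde d=\tilde u\cdot\nabla\tilde d-|\nabla\tilde d|^2\tilde d$. The base step uses only $\tilde u,\nabla\tilde d\in L^\infty$: the forcing $G:=\tilde u\otimes\tilde u+\nabla\tilde d\odot\nabla\tilde d\in L^\infty$, so interior $L^q$ theory for the Stokes system gives $\tilde u\in W^{1,q}$ for all $q<\infty$, hence $\tilde u\in C^{0,\alpha}$; likewise the right-hand side of the $\tilde d$-equation is in $L^\infty$, so $\tilde d\in W^{2,q}$ and $\nabla\tilde d\in C^{0,\alpha}$. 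Then $G\in C^{0,\alpha}$ and Schauder estimates for the Stokes system upgrade this to $\tilde u\in C^{1,\alpha}$, $\tilde p\in C^{0,\alpha}$, while Schauder for $\Delta\tilde d=\tilde u\cdot\nabla\tilde d-|\nabla\tilde d|^2\tilde d$ gives $\tilde d\in C^{2,\alpha}$.

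The inductive step is then a clean alternation: assuming $\tilde u\in C^{k,\alpha}$, $\tilde p\in C^{k-1,\alpha}$, $\tilde d\in C^{k+1,\alpha}$ on an annulus, the products $\tilde u\otimes\tilde u$ and $\nabla\tilde d\odot\nabla\tilde d$ lie in $C^{k,\alpha}$, so Stokes Schauder gives $\tilde u\in C^{k+1,\alpha}$, $\tilde p\in C^{k,\alpha}$; the right-hand side $\tilde u\cdot\nabla\tilde d-|\nabla\tilde d|^2\tilde d$ then lies in $C^{k,\alpha}$, and Schauder for the Laplacian gives $\tilde d\in C^{k+2,\alpha}$, all on the next smaller annulus with constants depending only on $n,k,\alpha,C_0$. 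Because only finitely many shrinking steps are needed to reach $\{|y|=1\}$ for each fixed $k$, the resulting constants are uniform in $r$, which is exactly what the scaling reduction requires.

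The one point requiring care is the $k=0$ pressure bound $|p(x)|\le C_0|x|^{-2}$: the Stokes estimates control $\tilde p$ only up to an additive constant (its annular average), so the scaling argument directly yields $|\nabla^k p|\le C_k|x|^{-(k+2)}$ for $k\ge 1$ but not immediately the pointwise bound on $p$ itself. I would resolve this by fixing the normalization $p(x)\to 0$ as $|x|\to\infty$ and integrating the already-established decay $|\nabla p(x)|\le C_1|x|^{-3}$ along the ray through $x$, giving $|p(x)|\le\int_{|x|}^\infty C_1 s^{-3}\,ds=\tfrac12 C_1|x|^{-2}$. The main obstacle is thus not conceptual but bookkeeping: keeping the interior Stokes $L^q$/Schauder constants uniform across the coupling and managing the pressure normalization; everything else is a standard scale-invariant bootstrap.
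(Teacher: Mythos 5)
Your proposal is correct and follows essentially the same route as the paper: a scaling reduction to uniform interior estimates for the rescaled triple, an alternating Stokes/Laplace elliptic bootstrap with constants depending only on $n,k,C_0$, and a normalization of $p$ at infinity combined with integration of the established decay $|\nabla p|\le C_1|x|^{-3}$ to recover the $k=0$ pressure bound. The only cosmetic differences are that the paper rescales about $x_0$ onto a fixed ball rather than about the origin onto an annulus, runs the bootstrap in $L^q$/Sobolev spaces rather than Schauder spaces, and treats the pressure via spherical averages $\bar p(r)$ rather than integration along rays.
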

\begin{proof}[Proof of Lemma \ref{lemma1}]
This Lemma was proven by \cite[Lemma X.9.2]{Galdi11} for the steady Navier-Stokes equation.
Here we will follow this proof by making necessary changes.

Fix $x_0 \in \mathbb{R}^n\setminus \{0\}$, and let $R=|x_0|/3$. Define
    \begin{align*}
        \tilde{u}(x)= R \, u(Rx+x_0),
        \quad
        \tilde{p} (x) = R^2 \, p(Rx+x_0),
        \quad
        \tilde{d} (x) = d(Rx+x_0).
    \end{align*}
Then $(\tilde u, \tilde p, \tilde d)$ solves \eqref{SEL} in $B_2(0)$, and \eqref{cond510} implies that 
    \begin{align}
    \label{est542}
        |\tilde{u}(x)|\leq C_0, 
        \quad |\nabla \tilde d(x)|\leq C_0,
        \ \forall\ x\in B_2(0).
    \end{align}
Applying estimates of the Stokes system \cite[Lemma 2.12]{Tsai18} to the first equation of \eqref{SEL},  we obtain that,
for $1<q<\infty$,
    \begin{align}
    \label{est550}
        \|\nabla \tilde{u}\|_{L^q(B_1(0))}\leq C
        \big(\|\tilde{u}\|_{L^q(B_2(0))}
        +\| \tilde{u}\|_{L^{2q} (B_2(0))}^2
        +\|\nabla \tilde{d}\|_{L^{2q}(B_2(0))}^2
        \big)
        \leq C
    \end{align}
for some constant $C$ depending on $n, q$, and $C_0$
in \eqref{est542}.

Applying estimates for the Laplace equation \cite[Theorem 9.11]{GilbargTrudinger98} to third equation of $\eqref{SEL}$
yields
    \begin{align}
    \label{est562}
        \|\nabla^2 \tilde{d}\|_{L^q (B_1(0))} 
        \leq 
        C (1+\|\tilde{u}\|_{L^\infty(B_2(0))})\|\nabla\tilde{d}\|_{L^q(B_2(0))}+C\|\nabla \tilde{d}\|_{L^{2q}(B_2(0))}^2
        \leq C
    \end{align}
for some constant $C$ depending only on $n,q$, and 
$C_0$ in \eqref{est542}.

Applying estimates of the Stokes system again yields that for $1<q<\infty$,
    \begin{align*}
        \|\nabla^2 \tilde{u}\|_{L^q (B_{1/2}(0))} 
        \leq 
        C(
        \|\tilde{u} \|_{W^{1,q} (B_1(0))}
        +
        \||\tilde{u}||\nabla \tilde{u}|\|_{L^q(B_1(0))}
        +
        \||\nabla \tilde d||\nabla^2 \tilde d|\|_{L^q(B_1(0))}
        )
        \leq C,
    \end{align*}
where we use \eqref{est542}, \eqref{est550}, \eqref{est562}. This estimate, together with
the Sobolev embedding, implies that 
\begin{equation*}
\label{grad1}\|\nabla \tilde{u}\|_{L^\infty(B_{\frac12}(0))}\leq C.
\end{equation*}
This implies
    \begin{align*}\label{grad2}
        |\nabla u(x_0)|\leq \frac{C}{R^2} \le \frac{C}{|x_0|^2}.
    \end{align*}

Next, applyig the estimates of Laplace equations, we  obtain that
    \begin{align*}
        &\|\nabla^3 \tilde{d}\|_{L^q (B_\frac34(0))}\\ 
        &\leq 
        C \Big( \|\nabla \tilde {d}\|_{L^q(B_1(0))} + \|(|\tilde {u}|+|\nabla \tilde{d}|)|\nabla^2\tilde{d}|\|_{L^q(B_1(0))}+\||\nabla\tilde{u}||\nabla\tilde{d}|\|_{L^q(B_1(0))}\Big)\leq C.
    \end{align*}
This, combined with  the Sobolev embedding, yields
that
    \begin{align*}
        \big\|\nabla^2 \tilde{d}\big\|_{L^\infty(B_\frac12(0))}\leq C. 
    \end{align*}
This implies 
    \begin{align*}
        |\nabla ^2 d(x_0)|\leq \frac{C}{R^2}\le \frac{C}{|x_0|^2}.
    \end{align*}

Through a boot-strap argument, one can derive the estimates \eqref{main_est} for $u, d$. 
Since $$\nabla p=\Delta u-u\cdot\nabla u-\nabla\cdot(\nabla d\odot\nabla d),$$
the estimates of $\nabla^k p$, for $k\geq 1$, follows from that of $u$ and $d$. 

Finally, we want to derive the estimate of $p$. 
Define $\bar{p}(r)$ by
    \begin{align*}
        \bar{p}(r)=
        \frac{1}{|\partial B_1|} \int_{\partial B_1} p(r,\omega) \,d\sigma(\omega).
    \end{align*}
Then 
    \begin{align*}
        \left| \frac{\partial \bar p}{\partial r} (r) \right|
        =
        \left|
        \frac{1}{|\partial B_1|}
        \int_{\partial B_1} \frac{\partial p}{\partial r} (r,\omega) \, d\sigma
        \right|
        \leq
        \frac{1}{|\partial B_1|}
        \int_{\partial B_1} \frac{C_1}{r^3} \, d\sigma
        \leq \frac{C_1}{r^3}.
    \end{align*}
Therefore, for $0<r_1<r_2,$
    \begin{equation}
    \label{eq636}
        |\bar{p}(r_2)-\bar{p}(r_1)|
        =
        \left|
        \int _{r_1}^{r_2} \frac{\partial \bar{p}}{\partial r} \, dr
        \right|
        \leq
        \int_{r_1}^{r_2} \frac{C_1}{r^3} \, dr
        =C_1 \left(\frac{1}{r_1^2}- \frac{1}{r_2^2} \right).
    \end{equation}
Hence it follows that $\displaystyle\lim _{r\to\infty} \bar{p} (r)$ exists and is finite.
Since $p$ is defined up to a constant, we can assume without loss of generality that $\displaystyle
        \lim_{r\to\infty} \bar{p}(r)=0.
$        
By taking the limit as $r_2 \to\infty$ in \eqref{eq636}, we obtain that for all $r>0$,
    \begin{align*}
        |\bar{p}(r)| \leq \frac{C_1}{r^2}.
    \end{align*}
Observe that for $r>0$ and $\omega\in \mathbb{S}^{n-1}$, 
{
    \begin{align*}
        |p(r, \omega)-\bar{p}(r) |
        &=\left|
        p(r,\omega) 
        -
        \frac{1}{|\partial B_1|} \int _{\partial B_1} p (r,\theta) \, d\sigma
        \right|
        \leq
         \frac{1}{|\partial B_1|}  \int _{\partial B_1}
         r |\nabla p (r,\theta)| \, d\sigma
         \\
         &\leq
         \frac{1}{|\partial B_1|} \int _{\partial B_1}
         \frac{C_1}{r^2} \, d\sigma= \frac{C_1}{r^2}.
    \end{align*}
}
Therefore,
    \begin{align*}
        |p(x)| \leq \frac{2C_1}{|x|^2}, \ \forall x\in\mathbb{R}^n\setminus\{0\}.
    \end{align*}
This finishes the proof.
\end{proof}

Now, we are ready to prove Theorem \ref{4Dthm}.
\begin{proof}[Proof of Theorem \ref{4Dthm}]
Multiplying the third equation of \eqref{SEL} by $\Delta d+|\nabla d|^2 d$ and the first equation of \eqref{SEL} by $u$, and then adding the resulting equations,  we obtain    
    \begin{equation}
    \label{eq528}
     |\nabla u|^2+|\Delta d+ |\nabla d|^2 d|^2
     =\Delta (\frac12|u|^2)-u\cdot\nabla \big(\frac12|\nabla d|^2+\frac12|u|^2+p\big)
\ {\rm{in}}\ \mathbb{R}^n\setminus\{0\}.   \end{equation}
Integrating \eqref{eq528} over the annual region $A_{r,R}=B_R\setminus B_r, 0<r<R$, we obtain
\begin{align}
\label{main_energ}
\begin{aligned}
0\leq &\int_{A_{r,R}}(|\nabla u|^2+|\Delta d+|\nabla d|^2d|^2)\,dx
=\int_{\partial A_{r, R}} \left( \left\langle u, \frac{\partial u}{\partial\nu} \right\rangle - H \langle u,\nu\rangle \right)\,d\sigma
\\
&=\int_{\partial B_R}  
\left(
\left \langle u, \frac{\partial u}{\partial r}\right \rangle 
- H \left \langle u, \frac{x}{|x|} \right \rangle 
\right)\,d\sigma\\
&\quad-\int_{\partial B_r}  
\left( 
\left \langle u, \frac{\partial u}{\partial r} \right\rangle 
- H \left \langle u,\frac{x}{|x|} \right\rangle \right)
\,d\sigma=h(R)-h(r)
\end{aligned}
\end{align}
where $H=\frac12|\nabla d|^2+\frac12|u|^2+p$ denotes the generalized head pressure function, and
$$
h(\tau):=\int_{\partial B_\tau}  
\left(
\left \langle u, \frac{\partial u}{\partial r} \right\rangle 
- H \left \langle u, \frac{x}{|x|} \right\rangle
\right)\,d\sigma, \ \tau>0.
$$
Using \eqref{main_cond213}, one can apply Lemma \ref{lemma1} to obtain
\begin{equation*}
|u(x)|^2+|\nabla u(x)|+|p(x)|+|\nabla d(x)|^2\le \frac{C}{|x|^2}, \ x\in\mathbb R^4\setminus\{0\},
\end{equation*}
so that $h\in L^\infty (0,\infty).$
On the other hand, \eqref{main_energ} implies that $h(R)$ is  increasing with respect to $R>0$. Thus 
both $\displaystyle\lim_{R\to \infty}h(R)$ and $\displaystyle\lim_{r\to 0}h(r)$ exist and are finite. Hence
$$
\int_{\mathbb R^4} (|\nabla u|^2+|\Delta d+|\nabla d|^2d|^2)\,dx<+\infty.
$$
This inequality, combined with the fact that $\displaystyle\lim_{|x|\to\infty} u(x)=0$ and the Sobolev embedding theorem, implies that 
$$\int_{\mathbb{R}^4}|u(x)|^4\,dx<\infty.$$
By Fubini's theorem,  we can find two sequences $r_i\to 0$ and
$R_i\to\infty$ such that
$$
\lim_{i\to\infty}r_i\int_{\partial B_{r_i}} |u(x)|^4\,d\sigma 
=0,\ 
\lim_{i\to\infty}R_i\int_{\partial B_{R_i}} |u(x)|^4\,d\sigma 
=0.
$$
Since
\begin{align*}
|h(R)|
&\le CR^{-2}\int_{\partial B_R}|u(x)|\,d\sigma
\le C\big(R\int_{\partial B_R}|u(x)|^4\,d\sigma\big)^\frac14, 
\end{align*}
we conclude that
$$\displaystyle\lim_{i\to \infty}h(r_i)=0,
\ \ \displaystyle\lim_{i\to \infty}h(R_i)=0,$$
hence
$$
\int_{\mathbb R^4} (|\nabla u|^2+|\Delta d+|\nabla d|^2d|^2)\,dx=0.
$$
Therefore, $u\equiv 0$ in $\mathbb R^4$ and $d:\mathbb R^4\to\mathbb S^3$ is a harmonic map satisfying
$$
|\nabla d(x)|<\frac{\varepsilon_1}{|x|}, \ \forall x\in\mathbb R^4\setminus\{0\}.
$$
The next Lemma shows that $d$ must be constant. 
This finishes the proof.
\end{proof}

\begin{lemma}\label{liouville} For $n\ge 3$, there exists an $\varepsilon_n>0$ such that if $d\in C^\infty(\R^n\setminus\{0\}, \mathbb S^{n-1})$ is a harmonic map
that satisfies
\begin{equation}\label{small1-1}
|\nabla d(x)|\le \frac{\varepsilon_n}{|x|}, \ \forall x\in \R^n\setminus\{0\},
\end{equation}
then $d$ must be a constant map.
\end{lemma}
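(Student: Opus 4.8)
The plan is to combine the monotonicity formula for harmonic maps with a blow-down (tangent-map) analysis at spatial infinity and the energy gap for harmonic maps between round spheres. Since $d$ is a smooth, hence stationary, harmonic map on $\mathbb{R}^n\setminus\{0\}$ and $n\ge 3$, the hypothesis $|\nabla d(x)|\le \varepsilon_n/|x|$ guarantees locally finite energy:
\[
\int_{B_r}|\nabla d|^2\,dx\le \varepsilon_n^2\,|\mathbb{S}^{n-1}|\int_0^r s^{n-3}\,ds=\frac{\varepsilon_n^2\,|\mathbb{S}^{n-1}|}{n-2}\,r^{n-2}.
\]
Consequently the normalized energy $\Theta(r):=r^{2-n}\int_{B_r}|\nabla d|^2\,dx$ is uniformly bounded, $\Theta(r)\le \frac{\varepsilon_n^2|\mathbb{S}^{n-1}|}{n-2}$ for all $r>0$.

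First I would establish that $\Theta$ is monotone non-decreasing. Although $0$ is a priori a singular point, testing the stationarity identity with the radial field $X=x$ on the annulus $B_\rho\setminus B_\epsilon$ produces an inner boundary term on $\partial B_\epsilon$ of size $O\!\big(\epsilon\int_{\partial B_\epsilon}|\nabla d|^2\big)=O(\varepsilon_n^2\,\epsilon^{n-2})$, which vanishes as $\epsilon\to0$ precisely because $n\ge 3$. Hence the standard monotonicity identity
\[
\frac{d}{dr}\Theta(r)=2\,r^{2-n}\int_{\partial B_r}\Big|\frac{\partial d}{\partial r}\Big|^2\,d\sigma\ge 0
\]
holds for all $r>0$, so the limits $\Theta(0^+)$ and $\Theta(\infty)$ exist and are finite.

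Next I would run the blow-down. Setting $d_\lambda(x):=d(\lambda x)$, each $d_\lambda$ is harmonic and obeys the same scale-invariant bound $|\nabla d_\lambda(x)|\le \varepsilon_n/|x|$, so Lemma \ref{lemma1} (applied with $u\equiv0$) supplies uniform bounds on all derivatives of $d_\lambda$ on compact subsets of $\mathbb{R}^n\setminus\{0\}$. Thus a subsequence converges in $C^\infty_{\mathrm{loc}}(\mathbb{R}^n\setminus\{0\})$ to a harmonic map $d_\infty$. Since $\Theta_{d_\lambda}(r)=\Theta(\lambda r)\to\Theta(\infty)$ for every fixed $r$, the limit satisfies $\Theta_{d_\infty}(r)\equiv\Theta(\infty)$; feeding this back into the monotonicity identity forces $\partial_r d_\infty\equiv0$, i.e. $d_\infty(x)=\omega(x/|x|)$ is homogeneous of degree $0$. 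By the chain rule (exactly as in \eqref{ahm} with $v=0$), $\omega:\mathbb{S}^{n-1}\to\mathbb{S}^{n-1}$ is then a harmonic map with $|\nabla_{\mathbb{S}^{n-1}}\omega|\le\varepsilon_n$ on $\mathbb{S}^{n-1}$.

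Finally I would invoke the energy gap: there is a constant $\delta_n>0$ such that every nonconstant harmonic map $\mathbb{S}^{n-1}\to\mathbb{S}^{n-1}$ has $\|\nabla_{\mathbb{S}^{n-1}}\omega\|_{C^0}\ge\delta_n$ (this is exactly the positivity asserted in the Remark following Theorem \ref{4Dthm}, and follows from the small-energy regularity/gap theorem for harmonic maps combined with compactness). Choosing $\varepsilon_n<\delta_n$ forces $\omega$, hence $d_\infty$, to be constant, so $\Theta(\infty)=0$. Because $\Theta$ is non-negative and non-decreasing with $\Theta(\infty)=0$, we conclude $\Theta\equiv0$, i.e. $\nabla d\equiv0$ and $d$ is constant. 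I expect the main obstacle to be the blow-down step: one must verify the uniform higher-order estimates and, crucially, that $\Theta$ is genuinely monotone despite the singularity at the origin, where it is the dimensional restriction $n\ge3$ that renders the origin's contribution negligible. (Alternatively, one may bypass tangent maps altogether and rerun the Lorentz-space iteration of Section \ref{sec_small} verbatim with $u\equiv0$, which also yields $\nabla d\equiv0$, albeit with a smaller, non-sharp constant $\varepsilon_n$.)
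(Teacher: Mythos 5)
Your proposal is correct, but it takes a genuinely different route from the paper's proof. The two arguments share the first half: the scale-invariant bound with $n\ge 3$ gives uniformly small normalized energy, and the stationarity (Pohozaev) identity tested away from the origin yields monotonicity of $\Theta(r)=r^{2-n}\int_{B_r}|\nabla d|^2$ across the singularity, your inner-boundary estimate $O(\varepsilon_n^2\epsilon^{n-2})$ being the paper's cutoff computation in a slightly different guise. After that they diverge. The paper propagates the smallness $R^{2-n}\int_{B_R(x_0)}|\nabla d|^2\le C\varepsilon_n^2$ to \emph{all} centers $x_0$ and radii $R$, then invokes the Rivi\`ere--Struwe (Evans, Bethuel) $\varepsilon$-regularity theorem to conclude that $d$ is smooth across the origin together with the scale-invariant estimate $R^2\sup_{B_{R/2}}|\nabla d|^2\le CR^{2-n}\int_{B_R}|\nabla d|^2\le C\varepsilon_n^2$, and sends $R\to\infty$. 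You instead blow down at infinity, obtain a $0$-homogeneous tangent map $d_\infty(x)=\omega(x/|x|)$, and kill $\omega$ by a gap theorem for harmonic maps $\mathbb{S}^{n-1}\to\mathbb{S}^{n-1}$ with small gradient, after which monotonicity forces $\Theta\equiv 0$. Your steps are sound: the appeal to Lemma \ref{lemma1} with $u\equiv 0$ is legitimate because a harmonic map into the sphere yields the solution $(0,-\frac12|\nabla d|^2,d)$ of \eqref{SEL} (or one may just use interior estimates for the harmonic map system), and the gap theorem you quote is true --- though it deserves a cleaner justification than ``regularity plus compactness'': either the Bochner formula $\Delta\frac12|\nabla\omega|^2\ge (n-2)|\nabla\omega|^2-C|\nabla\omega|^4$ integrated over $\mathbb{S}^{n-1}$, or, closer in spirit to this paper, the observation that $|\nabla_{\mathbb{S}^{n-1}}\omega|\le\varepsilon_n<\frac12$ confines the image of $\omega$ to an open hemisphere, whereupon the convexity/maximum-principle argument of Section \ref{sec_sym} gives constancy. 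One small point you should make explicit: the identity $\Theta_{d_\infty}(r)\equiv\Theta(\infty)$ requires that no energy concentrates at the origin along the blow-down, which follows from the uniform bound $r^{2-n}\int_{B_s}|\nabla d_\lambda|^2\le C\varepsilon_n^2 (s/r)^{n-2}$ for $n\ge 3$. As for what each approach buys: the paper's proof is shorter and yields smoothness of $d$ across the origin as a byproduct, at the cost of the $\varepsilon$-regularity machinery; yours never needs regularity theory at the origin, stays entirely within smooth maps, and isolates the geometric input (the spherical gap theorem), and both generalize to arbitrary compact targets.
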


\begin{remark} i) The same proof implies Lemma 
\ref{liouville} holds when $\mathbb{S}^{n-1}$ is replaced by any compact smooth Riemannian submanifold $(N, h)\subset\mathbb R^L$ without boundary. \\
ii) Lemma 5.2 can be proved by the same argument as in Theorem \ref{main_small} for the case that $u\equiv 0$. Here we will present a different proof that is based on the regularity theory  harmonic maps. See also \cite{Liao1985} for another proof. 
\end{remark}
\begin{proof}[Proof of Lemma \ref{liouville}] First, it follows from $n\ge 3$ and \eqref{small1-1} that for any $R>0$,
\begin{align}\label{smallh1}
R^{2-n}\int_{B_R} |\nabla d|^2\,dx&\le R^{2-n}\int_{B_R} \frac{\varepsilon_n^2}{|x|^2}\,dx
=\frac{\mathcal{H}^{n-1}(\mathbb S^{n-1})}{n-2} \varepsilon_n^2.
\end{align}

Next, we want to derive the following energy monotonicity
inequality: 
\begin{equation}\label{monotonicity_ineq1}
r^{2-n}\int_{B_r(0)}|\nabla d|^2\,dx\le R^{2-n}\int_{B_R(0)}|\nabla d|^2\,dx
\end{equation}
holds for any $0<r\le R<\infty$. 

Since $d$ is a smooth harmonic map on $\R^n\setminus\{0\}$, 
it is well-known that $d$
satisfies the following stationarity identity:
\begin{align}\label{stat}
\int_{\R^n} \Big(|\nabla d|^2{\rm{div}} Y-2\langle \frac{\partial d}{\partial x_i}, \frac{\partial d}{\partial x_j}\rangle 
\frac{\partial Y^i}{\partial x_j}\Big)\,dx=0,
\ \forall Y\in C_0^\infty(\mathbb{R}^n\setminus\{0\}, \mathbb{R}^n).
\end{align}
For any $\delta>0$, let $\eta_\delta\in C^\infty(\R^n)$ be such that 
$$0\le \eta_\delta\le 1, \  \eta_\delta=0 \ {\rm{in}}\  B_\delta,\ 
\eta_\delta=1 \ {\rm{outside}}\ B_{2\delta}, \ {\rm{and}}\ |\nabla \eta_\delta|\le \frac{8}{\delta},$$
and $\phi_\delta\in C_0^\infty(\R)$ be such that
$$0\le \phi_\delta\le 1, \  \phi_\delta(t)=1 \ {\rm{for}} \ |t|\le R(1-\delta),\ 
\phi_\delta(t)=0 \ {\rm{for}}\ |t|\ge R, \ {\rm{and}}\ |\phi_\delta'|\le \frac{8}{R\delta}.$$
Substituting $Y(x)=\eta_\delta(x) \phi_\delta(|x|) x\in C_0^\infty(\R^n\setminus\{0\},\R^n)$ into
\eqref{stat}, we have
\begin{align*}
0&=\int_{\R^3} \Big(|\nabla d|^2{\rm{div}}(\eta_\delta(x) \phi_\delta(|x|) x)-2\langle \frac{\partial d}{\partial x_i}, \frac{\partial d}{\partial x_j}\rangle 
\frac{\partial (\eta_\delta(x)\phi_\delta(|x|) x^i)}{\partial x_j}\Big)\,dx\\
&=(n-2)\int_{\R^3} \eta_\delta(x)\phi_\delta(|x|) |\nabla d|^2\,dx+\int_{\R^n} \eta_\delta(x) \phi_\delta'(|x|) \big(|x||\nabla d|^2-2\langle \frac{\partial d}{\partial x_i}, \frac{\partial d}{\partial x_j}\rangle
\frac{x^ix^j}{|x|}\big)\,dx \\
&\quad+\int_{\R^3}\phi_\delta(|x|) \Big(|\nabla d|^2 x\cdot \nabla\eta_\delta
-2\langle \frac{\partial d}{\partial x_i}, \frac{\partial d}{\partial x_j}\rangle x^i\frac{\partial \eta_\delta}{\partial x_j}\Big)\,dx.
\end{align*}
Observe that as $\delta\to 0$, 
$$\int_{\R^3} \eta_\delta(x)\phi_\delta(|x|) |\nabla d|^2\,dx\to \int_{B_R(0)} |\nabla d|^2\,dx,$$
\begin{align*}
&\int_{\R^3} \eta_\delta(x) \phi_\delta'(|x|) \big(|x||\nabla d|^2-2\langle \frac{\partial d}{\partial x_i}, \frac{\partial d}{\partial x_j}\rangle
\frac{x^ix^j}{|x|}\big)\,dx\\
&\rightarrow -R\int_{\partial B_R(0)}\big(|\nabla d|^2-2|\frac{\partial d}{\partial |x|}|^2\big)\,d\sigma,
\end{align*}
and
\begin{align*}
&\Big|\int_{\R^3}\phi_\delta(|x|) \Big(|\nabla d|^2 x\cdot \nabla\eta_\delta
-2\langle \frac{\partial d}{\partial x_i}, \frac{\partial d}{\partial x_j}\rangle x^i\frac{\partial \eta_\delta}{\partial x_j}\Big)\,dx\Big|\\
&\le C\delta^{-1} \int_{B_{2\delta}(0)} |x||\nabla d|^2\,dx\le C\int_{B_{2\delta}(0)}|\nabla d|^2\,dx\to 0.
\end{align*}
Hence, by sending $\delta\to 0$, we obtain
$$
(2-n)\int_{B_R(0)}|\nabla d|^2\,dx+R\int_{\partial B_R(0)}\big(|\nabla d|^2-2|\frac{\partial d}{\partial |x|}|^2\big)\,d\sigma=0,
$$
which implies
\begin{equation}\label{monotonicity_ineq2}
\frac{d}{dR} \big(R^{2-n}\int_{B_{R}(0)}|\nabla d|^2\,dx\big)=2R^{2-n}\int_{\partial B_R(0)}|\frac{\partial d}{\partial |x|}|^2\,d\sigma\ge 0.
\end{equation}
Integrating \eqref{monotonicity_ineq2} over $R$ yields \eqref{monotonicity_ineq1}.

Now we want to show $d$ is smooth near the origin. To see this, we first claim that
\begin{equation}\label{small4}
R^{2-n} \int_{B_R(x_0)}|\nabla d|^2\,dx\le C\varepsilon_n^2, \ \forall x_0\in \R^n, \ \ R>0.
\end{equation}
From \eqref{monotonicity_ineq1}
and \eqref{smallh1}, \eqref{small4} holds when $x_0=0$. For $x_0\not=0$, we divide $R$ into two cases:\\

\noindent a)  $0<R<|x_0|$. Since $d\in C^\infty(\R^n\setminus \{0\})$, it follows from the energy monotonicity inequality of $d$ on $\R^n\setminus\{0\}$ that
$$
R^{2-n}\int_{B_R(x_0)} |\nabla d|^2\,dx\le |x_0|^{2-n}\int_{B_{|x_0|}(x_0)} |\nabla d|^2\,dx
\le |x_0|^{2-n}\int_{B_{2|x_0|}(0)}|\nabla d|^2\,dx\le C\varepsilon_n^2,
$$
where we have used \eqref{small1-1} in the last step.\\

\noindent b) $R\ge |x_0|$. In this case, we have
$$
R^{2-n}\int_{B_R(x_0)}|\nabla d|^2\,dx\le R^{2-n}\int_{B_{2R}(0)}|\nabla d|^2\,dx\le C\varepsilon_n^2,
$$ 
where we have used \eqref{small1-1} in the last step.

Now we can apply the small
energy regularity theorem of harmonic maps by
Riviere-Struwe \cite{RiviereStruwe2008} (see, also, Evans \cite{Evans1991} and Bethuel \cite{Bethuel1993}) to conclude
that there exists an $\varepsilon_n>0$ such that if \eqref{small1-1} holds, then \eqref{small4} implies that $d\in C^\infty(\R^n,\mathbb S^{n-1})$, and
\begin{equation}\label{gradient_bound}
R^2\sup_{B_{\frac{R}2}(0)}|\nabla d|^2\le C R^{2-n}\int_{B_R(0)}|\nabla d|^2\,dx\le C\varepsilon_n^2,\ \ \forall R>0.
\end{equation}
After sending $R\to\infty$, \eqref{gradient_bound} yields $|\nabla d|\equiv 0$ in $\R^n$ so that $d$ must be  constant.
\end{proof}

\noindent{\bf Acknowledgements}. This work was initiated during the first author was visiting Purdue University. He would like to thank Purdue University for the hospitality. The second author is partially supported by NSF grant 2101224 and a Simons Travel Grant.

\bibliographystyle{amsplain}
\bibliography{mybib.bib}
\end{document}